\definecolor{orange}{rgb}{1,0.5,0}
\DeclareMathAlphabet{\mathpzc}{OT1}{pzc}{L}{it} 
\theoremstyle{definition}
\newtheorem{definition}{Definition}[section]
\newtheorem{theorem}[definition]{Theorem}
\newtheorem{proposition}[definition]{Proposition}
\newtheorem{lemma}[definition]{Lemma}
\newtheorem{remark}[definition]{Remark}
\newtheorem{claim}[definition]{Claim}
\def\geq{\geqslant}
\def\leq{\leqslant}
\def\epsilon{\varepsilon}
\def\loc{\operatorname{loc}}
\def\diam{\operatorname{diam}}
\newcommand{\bea}{\begin{eqnarray}}
  \newcommand{\eea}{\end{eqnarray}}
  \newcommand{\beab}{\begin{eqnarray*}}
  \newcommand{\eeab}{\end{eqnarray*}}
  \newcommand{\be}{\begin{equation}}
  \newcommand{\ee}{\end{equation}}
\newcommand{\abs}[1]{\left| #1 \right|}
\newcommand{\ve}{\epsilon}
\title{Horocycle flow on negative variable curvature surface is standard}
\author{Adam Kanigowski \and Kurt Vinhage\footnote{K. V. was supported by the National Science Foundation under Award DMS 1604796} \and Daren Wei\footnote{D. W. was partially supported by the NSF grant DMS-16-02409}}
\date{}
\begin{document}
\maketitle
\begin{abstract}
We provide a new proof that the horocycle flow preserving the Margulis measure on a variable negative curvature surface is standard\footnote{zero entropy loosely Bernoulli or loosely Kronecker}. This was first proved by Ratner in \cite{Ratner}. The main purpose of this note is to provide a simplified case of the arguments in \cite{KanigowskiVinhageWei}, which have similar but more complicated structures in the case of homogeneous flows on Lie groups, as well as illustrate the versatility of the method by applying it to a non-homogeneous flow.
\end{abstract}
\section{Introduction}
Let $M$ be a $C^3$ compact orientable surface of negative curvature and denote $U(M)$ as its unit tangent bundle. $U(M)$ carries a the natural geodesic flow , $g_s$. It is well known that 
$g$ is a $C^2$ anosov flow. As a result, we know for geodesic flow on $U(M)$, there exist invariant, orientable 1-dimensional foliations $\mathscr{H}$ and $\mathscr{K}$ of expanding curves and contracting curves, respectively. Since these foliations are 1-dimensional, it is natural to try to construct flows for which the leaves of $\mathscr{H}$ are orbits. There are many ways to do this, but two of them are the most natural: one may define $h_s(x)$ to be the point $y$ at distance $s$ from $x$ along the leaf $\mathscr{H}(x)$ (there are two such points, but the orientation on $\mathscr{H}(x)$ fixes the choice). Another is to do a similar process but with respect to a canonical measure on the leaf $\mathscr{H}(x)$, the disintegration of the {\it Margulis measure}.

We consider the second case in this paper, for a precise construction of the flow and its properties, see Section \ref{sec:para}. Ergodic properties of such flows were studied by Marcus, Feldman and Ornstein, and Ratner, among others (see \cite{Marcus,FelOrn,Ratner}). Marcus showed in \cite{Marcus} that such flows are uniquely ergodic. In \cite{Ratner}, Ratner showed that these flows (in fact any such flow parameterizing the leaves of $\mathscr{H}(x)$) were {\it standard}, also called {\it zero entropy loosely Bernoulli}. We provide a new proof of this result using different methods:

\begin{theorem}\label{thm:Main}
The horocycle flow $h_t$ preserving the Margulis measure is standard for any variable negative curvature surface.
\end{theorem}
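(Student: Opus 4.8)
The plan is to deduce Theorem~\ref{thm:Main} from the $\bar f$-matching criterion for standardness (see \cite{Ratner} and \cite{FelOrn}): once the time-one map $h_1$ is shown to have zero measure-theoretic entropy, it suffices to exhibit, for every $\varepsilon>0$, a set $G$ with $\mu(G)>1-\varepsilon$ such that for every $x\in G$, every $y$ in a transversal neighbourhood of $x$ of size depending only on $\varepsilon$, and every sufficiently large $T$, the orbit arcs $\{h_tx:0\le t\le T\}$ and $\{h_ty:0\le t\le T\}$ can be matched by an increasing time change $\tau$ with $|\tau'-1|<\varepsilon$ off a set of density $<\varepsilon$, while $d(h_tx,h_{\tau(t)}y)<\varepsilon$ off a set of $t\in[0,T]$ of density $<\varepsilon$. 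Zero entropy is immediate from the construction of $h_t$ in Section~\ref{sec:para}: the Margulis parameterization obeys the \emph{exact} renormalization relation $g_s\circ h_t\circ g_{-s}=h_{e^{cs}t}$, where $c>0$ is the topological entropy of the geodesic flow, this being exactly the defining scaling property of the conditional measures of the Margulis measure along the leaves of $\mathscr H$. For fixed $s$ the map $h_{e^{cs}}=g_s\circ h_1\circ g_{-s}$ is both conjugate to $h_1$ by the $\mu$-preserving map $g_s$ and equal to the time-$e^{cs}$ map of the flow $h_\bullet$; comparing entropies yields $e^{cs}h_\mu(h_1)=h_\mu(h_1)$, whence $h_\mu(h_1)=0$.

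The dynamical core is the divergence of nearby $h$-orbits, and two regimes must be separated. Along the \emph{weak-unstable} directions --- the geodesic direction together with $\mathscr H$ itself --- there is no genuine divergence: if $y$ differs from a point of the $h$-orbit of $x$ by a small geodesic push $g_s$, the relation $g_s\circ h_t\circ g_{-s}=h_{e^{cs}t}$ gives an \emph{exact} match for all $t$, with affine time change of slope $e^{-cs}=1+O(|s|)$ and spatial error $O(|s|)$. Along the remaining transverse direction, a displacement along the contracting horocycle $\mathscr K$, the orbits do diverge, but --- and this is the key estimate --- the divergence is dominated by a drift in the flow direction: if $y$ is obtained from $x$ by a $\mathscr K$-displacement of size $\delta$, then for $0\le t\lesssim\varepsilon/\delta$ one has $h_ty=\Phi_t(h_{\tau(t)}x)$, where $\tau$ is increasing with $|\tau'-1|=O(\delta t)$ and $\Phi_t$ is a transverse perturbation whose $\mathscr K$-component stays $\le\delta$ and whose geodesic component is $O(\delta t)$, the genuinely large part of the divergence, of size $O(\delta t^2)$, lying in the flow direction and hence invisible after the time change. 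In constant curvature this is the elementary $SL(2,\mathbb R)$ identity writing $h_t\bar h_\delta$ as $\bar h_{\delta'}g_{s'}h_{t'}$ with $\delta'=\delta/(1+\delta t)$, $s'=\log(1+\delta t)$, $t'=t/(1+\delta t)$; since $M$ need not have constant curvature there is no group, and the estimate must instead be extracted from the uniform hyperbolicity of $g$ and the bounded geometry of $M$, by controlling the H\"older holonomies of $\mathscr H$ and $\mathscr K$ and the induced quadratic shear of $h_t$ along $\mathscr H$.

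Finally one assembles a match on all of $[0,T]$ by concatenating ``good blocks'': on each block the estimate above applies and costs only an $O(\varepsilon)$ distortion of the time change; when the geodesic component of the residual displacement reaches $\varepsilon$, one renormalizes, re-anchoring the match to a nearby passage of the $h$-orbit of $x$ --- available with a frequency bounded below by the unique ergodicity of $h_t$ (Marcus \cite{Marcus}) --- after which the estimate applies afresh, and, because $g_s$ is ergodic for $\mu$, for $\mu$-a.e.\ $x$ these renormalization steps can be carried out with controlled total cost along the whole arc $[0,T]$. Summing the time-change distortions and the small excursions at the renormalizations over the blocks filling $[0,T]$ keeps the total within the budget $\varepsilon$ once the transversal neighbourhood in the criterion is chosen small enough in terms of $\varepsilon$; combined with zero entropy this gives standardness. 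The main obstacle is precisely this assembly: in the non-homogeneous setting the commutation relations linking $h_t$, $g_s$ and the $\mathscr K$-direction hold only up to errors, and one must prevent these errors --- and the accumulated time-change distortions --- from building up across the renormalizations; here the $g$-equivariance of the Margulis conditionals is used essentially, and this is the part of the argument that is substantially lighter than its higher-rank analogue in \cite{KanigowskiVinhageWei}.
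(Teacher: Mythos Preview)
Your outline takes a different route from the paper and has a genuine gap in the assembly step. The paper does \emph{not} match $I_R(x)$ and $I_R(y)$ for $y$ in a transversal neighbourhood of $x$ of size independent of $R$. Instead it introduces the pre-matching ball $PM(R,\ve^5,y)=\{g_uh_vk_wy:|u|,|v|<\ve^5,\ |w|<\ve^5/R\}$, whose $\mathscr K$-width shrinks like $1/R$; for such $x$ the Feldman--Ornstein function $\sigma$ gives a single global match on all of $[0,R]$ (Claim~A) because $|wt|<\ve^5$ throughout, and no block splitting or re-anchoring is needed. The apparent volume loss $\mu(PM)=\ve^{15}/R$ is then recovered by showing that the translates $h_{-p}(PM(R,\ve^5,y))$, $p\in[0,\ve^3R]$, are pairwise disjoint (Claim~B, via a short renormalization contradiction), and that their union lies in $B_R(y,\ve,\alpha_m)$ because $h_px\in PM$ already forces $x\in B_R$. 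Hence $\mu(B_R(y,\ve,\alpha_m))\ge \ve^3R\cdot \ve^{15}/R=\ve^{18}$, independent of $R$, and standardness follows.

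Your criterion demands a match for every $y$ in a neighbourhood of fixed size, and the block scheme you sketch does not achieve this. Already in constant curvature, for $y=k_\delta x$ the unique match is $\tau(t)=t/(1+\delta t)$, whose slope $(1+\delta t)^{-2}$ leaves $(1-\ve,1+\ve)$ once $t\gtrsim\ve/\delta$; the geodesic drift $2\log(1+\delta t)$ is monotone in $t$, so concatenating blocks only pushes the slope further from $1$ --- after $k$ blocks it is roughly $1-k\ve$. Your fix, re-anchoring to a nearby passage of the $h$-orbit of $x$ supplied by unique ergodicity, does not close the budget either: returning to an $\ve'$-neighbourhood costs a skip of order $\mu(\ve'\text{-ball})^{-1}\sim(\ve')^{-3}$, while the block gained before the next re-anchoring has length only $\sim\ve/\ve'$ (the new $\mathscr K$-displacement is merely $\le\ve'$), so the density of skipped time is $\sim(\ve')^{-2}/\ve$, which cannot be made small. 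The ``thin transversal plus disjoint $h$-translates'' device is precisely what replaces this assembly and is the main new idea of the paper.
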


The methods used here resemble those of a paper by the same authors \cite{KanigowskiVinhageWei}, and this note may serve as a good starting point before the more technical and involved arguments found there.

\section{Preliminaries on Standard Systems}
Standardness (zero entropy loosely Bernoulli or loosely Kronecker) is a concept introduced by A. Katok \cite{Katok1} and J. Feldman \cite{Feldman}. Standardness can be thought of as a measure of how closely a system resembles to a rigid action, as any standard system can be represented as a special flow over irrational rotation.







The notion of a standard system has many different equivalent definitions., as was shown in \cite{Katok1,Feldman,RatnerKaku1}. For our purposes, the most convenient is that of \cite{RatnerKaku1}, we summarize the notions and definition here.


The basic setting is following: suppose $h_t$ is an ergodic flow on a Lebesgue space $(X,\mathscr{B},\mu)$ and $\mathscr{P}$ is a finite measurable partition of $X$. For $x\in X$, denote $\mathscr{P}(x)$ as the atom of $\mathscr{P}$ which contains $x$ and define $I_R(x)=\{T_sx:s\in[0,R]\}$ for $R\geq0$. Let $l$ as the Lebesgue measure on $[0,R]$.

\begin{definition}[$(\epsilon,\mathscr{P})-$matchable, \cite{RatnerKaku1}]
For $x,y\in X$, $\epsilon>0$ and $R>1$, $I_R(x)$ and $I_R(y)$ are called {\it $(\epsilon,\mathscr{P})-$matchable} if there exists a subset $A=A(x,y)\subset[0,R]$, $l(A)>(1-\epsilon)R$ and an increasing absolutely continuous map $\psi=\psi(x,y)$ from $A$ onto $A'=A'(x,y)\subset[0,R]$, $l(A')>(1-\epsilon)R$ such that $\mathscr{P}(T_tx)=\mathscr{P}(T_{h(t)}y)$ for all $t\in A$ and derivative $\psi'=\psi'(x,y)$ satisfies
$$|\psi'(t)-1|<\epsilon\text{ for all }t\in A.$$
we call $\psi$ an $(\epsilon,\mathscr{P})-$matching from $I_R(x)$ onto $I_R(y)$.
\end{definition}


Let 
$$f_R(x,y,\mathscr{P})=\inf\{\epsilon>0:\text{ $I_R(x)$ and $I_R(y)$ are $(\epsilon,\mathscr{P})-$matchable}\}.$$

While $f_R$ is {\it not} a metric, it has enough similar properties to justify constructing $f_R$-balls.. Let $B_R(x,\epsilon,\mathscr{P})=\{y\in X:f_R(x,y,\mathscr{P})<\epsilon\}$ be the $(R,\mathscr{P})-$ball of radius $\epsilon>0$ centered at $x\in X$, $R>1$. $B_R(x,\ve,\mathscr{P})$ should be thought of as analogous to a Hamming ball in entropy theory, with the exception that we allow for a small time change given by $\psi$. 



\begin{theorem}[\cite{RatnerKaku1},\cite{RatnerKaku2}]
\label{thm:standard-criterion}
A zero-entropy ergodic measure preserving flow $h_t$ is standard if and only if for every $\ve > 0$ and a generating family of partitions $\mathscr{P}_i$, there exists $N(\ve,\mathscr{P}_i) > 0$ such that for every $R > 0$, there exist $x_1,\dots,x_n$ with $n \le N(\ve,\mathscr{P}_i)$ such that:

\[ \mu\left( \bigcup_{k=1}^n B_R(x,\ve,\mathscr{P}_i) \right) > 1-\ve \]
\end{theorem}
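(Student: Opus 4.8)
The statement is a reformulation of Ratner's characterization of standard (zero-entropy loosely Bernoulli) flows in terms of covering numbers of the balls $B_R(\cdot,\ve,\mathscr{P})$, which here play the role of Hamming balls in Kakutani-equivalence theory. The plan is to prove the two implications separately, reducing each to the special-flow-over-rotation picture.

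For the direction ``standard $\Rightarrow$ covering'', I would use that a standard flow is, up to measurable isomorphism, a special flow $h^f_t$ over an irrational rotation $R_\alpha$ of the circle $\T$ with an $L^1$ roof function $f$ bounded away from $0$. One works with a generating family of partitions adapted to this model, so that the atom $\mathscr{P}(h^f_t x)$ is, off a set of small measure, determined by which interval of $\T$ the base point of $h^f_t x$ lies in (together with the floor index, whose offset the matching will absorb). Given $\ve>0$, fix $\delta>0$ and apply unique ergodicity of $R_\alpha$: for a $(1-\ve)$-fraction of base points $\theta$ and all $n$ up to the relevant range $\approx R/\int f$, the Birkhoff sums $S_n f(\theta)$ are within $\ve n$ of $n\int f$, hence $S_n f(\theta)$ and $S_n f(\theta')$ are within $\ve n$ of each other whenever $|\theta-\theta'|<\delta$. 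Since $R_\alpha$ is an isometry, $R_\alpha^k\theta$ and $R_\alpha^k\theta'$ lie in the same partition interval for all but an $\ve$-fraction of the floors $k$, so the floor-by-floor identification of the orbit of $x$ with that of $y$ is an absolutely continuous time change $\psi$ with $|\psi'-1|<\ve$ off a set of measure $<\ve R$. Thus every interval of length $\delta$ in the base coordinate, together with its fibers, sits inside a single ball $B_R(\cdot,\ve,\mathscr{P})$; since $O(1/\delta)$ such boxes cover all but an $\ve$-fraction of $X$ \emph{uniformly in $R$}, this yields $N(\ve,\mathscr{P})$ independent of $R$.

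For the converse, ``covering $\Rightarrow$ standard'', I would begin with a pigeonhole argument: if $n\leq N(\ve,\mathscr{P})$ balls of radius $\ve$ cover a set of measure $>1-\ve$, then one of them, say $B_R(x_k,\ve,\mathscr{P})$, has $\mu$-measure at least $(1-\ve)/N(\ve,\mathscr{P})=:c(\ve)>0$; using the quasi-triangle inequality satisfied by $f_R$ one deduces $(\mu\times\mu)\{(x,y): f_R(x,y,\mathscr{P})<2\ve\}\geq c(\ve)^2$, a lower bound not depending on $R$. Together with the zero-entropy hypothesis this is exactly the ``$f$-very-weak-Bernoulli'' / zero-$f$-entropy condition isolated by Ratner, from which one reconstructs the special-flow-over-rotation model via the $f$-analogue of the Ornstein isomorphism machine; here I would simply invoke \cite{RatnerKaku1,RatnerKaku2}.

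The main obstacle is this converse direction, and within it the passage from the family of good matchings on orbit segments of length $R$, valid for every finite $R$, to an actual metric isomorphism with a rotation: the matchings at different scales must be made consistent, the individually small time changes $\psi$ must be controlled when concatenated as $R\to\infty$, and one must check that the bounded covering number does not conceal a nontrivial Pinsker-type factor. This is the technical core of the Ratner--Kakutani papers; for the present note it suffices to cite it, and the genuinely new work is the verification of the hypothesis of Theorem~\ref{thm:standard-criterion} for the horocycle flow, carried out in the sections that follow.
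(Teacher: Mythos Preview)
The paper does not prove Theorem~\ref{thm:standard-criterion} at all: it is stated as a black-box result imported from \cite{RatnerKaku1,RatnerKaku2}, with no argument given. So there is no ``paper's own proof'' to compare against; the only thing the paper adds is the remark immediately after the statement, namely the quasi-triangle inequality $f_R(x,z,\mathscr{P})<5\ve$ whenever $f_R(x,y,\mathscr{P}),f_R(y,z,\mathscr{P})<\ve$, which reduces the covering criterion to a uniform lower bound on $\mu(B_R(x,\ve,\mathscr{P}))$.

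Your sketch is a reasonable high-level outline of how the cited results go, and you yourself recognize in the last paragraph that for the purposes of this note a citation suffices. Two small points of caution if you intend to keep any of the sketch. First, in the forward direction you pass to a special-flow-over-rotation model and then verify the covering condition for partitions \emph{adapted to that model}; the theorem as stated asks for the condition with respect to an arbitrary generating family $\mathscr{P}_i$, so one either needs the (standard but nontrivial) fact that the covering property is independent of the choice of generating family, or one should rephrase the forward direction accordingly. Second, in the converse you correctly isolate the pigeonhole step and the quasi-triangle inequality, but the passage from ``$(\mu\times\mu)\{f_R<2\ve\}\ge c(\ve)^2$ for all $R$'' to an actual rotation model is the entire content of Ratner's theorem and cannot really be summarized further; your deferral to \cite{RatnerKaku1,RatnerKaku2} there is exactly what the paper does for the whole statement.
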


Therefore, the standardness of the system can be detected by computing the decay rate (or lack thereof) of $B_R(x,\ve,\mathscr{P})$ in $R$. In fact, because if $f_R(x,y,\mathscr{P}), f_R(y,z,\mathscr{P}) < \ve$, $f_R(x,z,\mathscr{P}) < 5\ve$, we know that a minimal cover of a set of size $1-\ve$ by $\ve$-$(R,\mathscr{P})$ balls will have their $\ve/5$-$(R,\mathscr{P})$ balls disjoint. In particular, if $\mu(B_R(x,\ve,\mathscr{P}))$ is bounded below independently of $x$ and $R$, we may conclude that $h_t$ is standard.



\section{Preliminaries on $W^u$ flows}\label{sec:para}


Let $S$ be a compact, negatively curved, oriented surface, and $g_s$ be the corresponding goedesic flow on its unit tangent bundle, $M$. There exists a 1-dimensional unstable foliation, with smooth leaves $W^u(x)$ for any $x\in M$. Since $S$ is oriented, the leaves $W^u(x)$ can be given an orientation. We wish to define a continuous 
flow $h_t$ whose orbits are exactly $W^u(x)$, but such a flow will depend on the way we parameterize each leaf.


We will introduce a very special parameterization by first constructing the desired invariant measure:

\subsection{Margulis measure}
\label{subsec:margulis}
The Margulis measure for a transitive Anosov flow was introduced by G. A. Margulis in \cite{Margulis}. For any $p\in M$, let  $W^*(p)$ be the leaf of the stable or unstable foliation  through $p$ for $* = s,u$, respectively, and $W^{0*}(p)$ be the leaf of the center-stable or center-unstable foliation foliation  through $p$ for $* = s,u$, respectively. Recall that $W^{0s}$ is the joint integration of the orbits of $g^t$ and the $W^s$ foliation, and similarly for $W^{0u}$. Define $OC(W^{0u}(p))$ as the collection of open sets in $W^{0u}(p)$ with compact closure and $$OC(W^{0u})=\bigcup_{p\in M}OC(W^{0u}(p)).$$

Let $O$ be a subset of $M$ contained in a sufficiently small neighborhood of a point $p$. Then set $f_O(x)=\mu^s((\{x\}\times U^s(p))\cap O)$ where $x\in U^{0u}(p)$ and $U(p)$ is a neighborhood of $p$ with local product form $U(p)=U^{0u}(p)\times U^s(p)$ such that $U^{0u}(p)\subset W^{0u}(p)$ and $U^s(p)\subset W^s(p)$. Then the Margulis measure will be a finite $g^t-$invariant measure with the following form:

\begin{proposition}[Margulis Measure, \cite{Margulis}]
\label{prop:margulis-measure}
There exists a unique family of measures $\mu^{0u}$ defined on the leaves $W^{0u}(x)$, $x \in M$, a unique family of measures $\mu^s$ defined on the leaves $W^s(x)$, $x \in M$, and a unique $g^t$-invariant measure $\mu$ on $M$ such that:

\begin{enumerate}[(a)]
\item $\mu^{0u}$ has the uniformly expansion property:
\begin{equation}
\mu^{0u}(g^t(U))=e^{h^ut}\mu^{0u}(U)
\end{equation}
where $U\subset OC(W^{0u})$, $h^u>0$ and $t\in\mathbb{R}$.
\item $\mu^s$ has the uniformly contraction property:
\begin{equation}
\mu^s(g^t(U'))=e^{h^st}\mu^s(U')
\end{equation}
where $U'\subset OC(W^s)$, $h^s<0$ and $t\in\mathbb{R}$.
\item Letting $\mu_q(A)=\mu^{0u}(A\times\{q\})$ for $q\in U^s(p)$, $A\subset U^{0u}(p)$, we have

\begin{equation}\label{eq:MargulisMeasure}
\mu(O)=\int f_O(x)d\mu_q(x).
\end{equation}
\end{enumerate}

Furthermore, $\mu$ is the unique measure of maximal entropy for $g^t$.
\end{proposition}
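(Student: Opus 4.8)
The plan is to follow Margulis's original construction \cite{Margulis}, streamlined for the surface case; the point of isolating it here is that the conditional measures $\mu^{0u}$ and $\mu^s$, rather than $\mu$ itself, are what the argument for Theorem \ref{thm:Main} will ultimately use. The two standing inputs are the local product structure of the Anosov flow $g^t$ and bounded distortion of $g^t$ along center-unstable and stable leaves (both available since the surface is $C^3$ and $g^t$ is $C^2$ Anosov).

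First I would build the center-unstable conditionals. Fix $p\in M$, equip $W^{0u}(p)$ with its induced Riemannian volume, fix a relatively compact open base plaque $U_0\subset W^{0u}(p)$, and consider the renormalized pushforwards $\nu_T=(g^T)_*(\mathrm{vol}^{0u}|_{g^{-T}U_0})/\mathrm{vol}^{0u}(g^{-T}U_0)$ as $T\to+\infty$. Bounded distortion shows the $\mathrm{vol}^{0u}$-densities of the $\nu_T$ have uniformly bounded logarithmic oscillation, so $\{\nu_T\}$ is weak-$*$ precompact; transitivity and mixing of $g^t$ then force all subsequential limits to coincide, yielding a single locally finite measure on $W^{0u}(p)$, and a gluing argument over a finite cover of $M$ produces a coherent family $\mu^{0u}$ (alternatively one may run this through a Markov coding and the Ruelle--Perron--Frobenius theorem). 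By construction $(g^{-t})_*\mu^{0u}$ is proportional to $\mu^{0u}$ with a constant that is multiplicative and continuous in $t$, hence of the form $e^{-h^u t}$; since $g^t$ expands the unstable direction, $h^u>0$. This is (a). Applying the same recipe to the time-reversed flow $g^{-t}$ produces $\mu^s$ together with the contraction law (b), with $h^s<0$.

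Next I would assemble $\mu$. The key verification is that $\mu^{0u}$ is invariant under stable holonomy and $\mu^s$ under center-unstable holonomy; this uses the uniqueness from the previous step, because a holonomy map conjugates the renormalized-pushforward dynamics on one leaf to that on another. Granting this, define $\mu$ on a product chart $U(p)=U^{0u}(p)\times U^s(p)$ by \eqref{eq:MargulisMeasure}; holonomy invariance makes it independent of the transversal point $q$ and of the chart, so the local definitions patch to a global finite Borel measure $\mu$. Its $g^t$-invariance follows from (a) and (b): on the chart $g^t$ acts as flow translation together with unstable expansion of Jacobian $e^{h^u t}$ on the cu-factor, times contraction of Jacobian $e^{h^s t}$ on the $s$-factor, and the two Jacobians cancel — which also pins down $h^s=-h^u$. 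Uniqueness of the triple $(\mu^{0u},\mu^s,\mu)$ is inherited from uniqueness of the subsequential limits.

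Finally, to see $\mu$ is the measure of maximal entropy: property (a) identifies the disintegration of $\mu$ along unstable leaves with the $\mu^u$, so by the Rokhlin/Ledrappier--Young formula $h_\mu(g^1)=h^u$, while Margulis's asymptotic count of closed orbits (equivalently, comparing unstable volume growth against the variational principle) gives $h^u=h_{\mathrm{top}}(g^1)$; uniqueness of the MME for a topologically mixing Anosov flow (Bowen) then finishes it. I expect the genuinely substantial step to be the first one — proving convergence of the renormalized pushforwards, independence of the limit from the reference volume and base plaque, and the correct holonomy transformation law — all of which rest on the bounded-distortion estimates. The rest is bookkeeping on top of this together with standard thermodynamic formalism, and the variable-curvature setting enters only in that there is no algebraic shortcut, so one must argue entirely through the hyperbolic dynamics.
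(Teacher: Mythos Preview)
The paper does not prove this proposition at all: it is stated as a known result attributed to Margulis \cite{Margulis} and used as a black box, so there is no ``paper's own proof'' to compare against. Your sketch is a reasonable outline of the standard construction (renormalized pushforwards of leaf volume, bounded distortion for precompactness and uniqueness, holonomy invariance to glue the product measure, and the entropy identification), and it is consistent with what the paper needs downstream --- namely the scaling laws (a), (b) and the local product formula (c) --- but for the purposes of this paper no proof is expected here.
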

\begin{remark}
\label{rem:reversible}
Because $\mu$ is the unique measure of maximal entropy for $g^t$, it is also the unique measure of maximal entropy for $g^{-t}$, another Anosov flow. Therefore, $\mu$ disintegrates in the analogous way for $\mu^{0s}$ and $\mu^u$.
\end{remark}

We now fix the flow $h_t$ whose orbits are the leaves $W^u(x)$: if $x \in M$, let $h_t(x)$ be the point $y$ such that the interval connecting $x$ and $y$ has $\mu^u$-measure exactly $t$. Of course, there are two such points, but the leaves $W^u(x)$ are oriented, so if $t > 0$, we choose $y$ in the positive direction, and if $t < 0$, we choose $y$ in the negative direction.

Note that in most cases, this action is only H\"older, and is not even generated by a vector field. The ergodic properties of these flows were first studied by B. Marcus in \cite{Marcus}, and later by Feldman and Ornstein in \cite{FelOrn}. We remark that making this choice for $h_t$ yields that

\[ g_sh_t = h_{e^st}g_s \]

by Proposition \ref{prop:margulis-measure}(b) and Remark \ref{rem:reversible}. Notice also that by a completely symmetric argument, an analogous flow $k_t$ may be built whose orbits are the leaves of $W^s$, and

\[ g_s k_t = h_{e^{-s}t}g_s. \]

Finally, because of the product structure provided by Proposition \ref{prop:margulis-measure}(c), the map:

\[f : (r,s,t) \mapsto g_rh_sk_t(x) \]

is a homeomorphism onto a neighborhood of $x$, and $f_*(dr \times ds \times dt) = \mu$.


\subsection{Estimates relating the flows $h_t$, $k_t$ and $g_t$}
In this section, we recall some lemmas from \cite{FelOrn} which will be helpful for the proof of the main theorem. We remark that for clarity, we h use $h_t$ and $k_t$ to denote the flows coming from the Margulis parameterization as described in the previous subsection, while the notation of \cite{FelOrn} uses $\bar{h}_t$ and $\bar{k_t}$ to denote these flows.

Since Anosov flows have a local product structure for their weak-stable and unstable manifolds, if $y \in W^u(x)$ and $z \in W^{0s}(x)$ are sufficiently close to $x$, there is a unique point in $W^{0s}_{\loc}(y) \cap W^u_{\loc}(z)$. Hence, there are unique continuous functions $\rho(x,s,t),\sigma(x,s,t),\tau(x,s,t)$ such that
\begin{equation}
g_{\rho}k_{\tau}h_sx=h_{\sigma}k_tx,
\end{equation}

$\sigma$ and $\tau$ has the same sign as $s,t$, respectively, and $\rho(x,0,t) = \rho(x,s,0) = 0$. Furthermore, $\rho(x,s,t)$ enjoys the equivariance property:

\begin{equation}
\label{eq:rho-equivariance}
\rho(x,e^{-u} s,e^ut) = \rho(g_ux,s,t)
\end{equation}

In particular, we may define $\rho$ not only for sufficiently small $s,t$, but for all $s,t$ such that $\abs{st}$ is sufficiently small. Similarly, we get definitions on such $s,t$ for $\sigma,\tau$. We have the following results of \cite{FelOrn}, the first of which appeared as Lemmas 3.4 and 2.8, respectively:




\begin{lemma}\label{lemma:3.4}
The function $\sigma$ is differentiable in the second argument, and for sufficiently small $st$,
$$\frac{\partial\sigma}{\partial s}(x,s,t)=e^{\rho(x,s,t)}.$$
\end{lemma}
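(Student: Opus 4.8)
The plan is to differentiate the defining identity $g_\rho k_\tau h_s x = h_\sigma k_t x$ in $s$ (holding $x$ and $t$ fixed), read off the component in the unstable direction, and evaluate it using a holonomy--invariance property of the Margulis family. Write $y := h_{\sigma(x,s,t)}k_tx$ for the common value of the two sides, $\delta := k_{\tau(x,s,t)}h_sx$, and let $X^0,X^s,X^u$ denote the generators of the flows $g_r,k_b,h_a$; since $\dim M=3$ and the Anosov splitting is $T_zM=\R X^0(z)\oplus\R X^s(z)\oplus\R X^u(z)$, these form a frame at every point (here we use the conditionals $\mu^u$ of Remark \ref{rem:reversible} to parameterize the unstable leaves). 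The right-hand side differentiates trivially, because $k_tx$ does not depend on $s$: $\tfrac{d}{ds}h_{\sigma(x,s,t)}k_tx=\tfrac{\partial\sigma}{\partial s}X^u(y)$, a vector in the unstable direction. So everything comes down to computing the $X^u$-component of $\tfrac{d}{ds}(g_\rho k_\tau h_sx)$.

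By the chain rule,
$$\frac{d}{ds}\big(g_\rho k_\tau h_sx\big)=\frac{\partial\rho}{\partial s}X^0(y)+\frac{\partial\tau}{\partial s}\,Dg_\rho X^s(\delta)+Dg_\rho Dk_\tau\, X^u(h_sx).$$
Differentiating the commutation relations $g_rh_v=h_{e^rv}g_r$ and $g_rk_v=k_{e^{-r}v}g_r$ at $v=0$ gives $Dg_rX^u=e^rX^u$, $Dg_rX^s=e^{-r}X^s$ and $Dg_rX^0=X^0$; hence, writing $Dk_\tau X^u(h_sx)=a\,X^0(\delta)+b\,X^s(\delta)+c\,X^u(\delta)$ in the frame at $\delta$, the $X^u$-component of the left-hand side is $e^{\rho}c$ (the $X^0$- and $X^s$-components merely determine $\partial\rho/\partial s$ and $\partial\tau/\partial s$, which we do not need). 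Thus $\tfrac{\partial\sigma}{\partial s}(x,s,t)=e^{\rho(x,s,t)}c$, and it remains to prove $c=1$.

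Now I would identify $c$ geometrically. It is the $X^u$-component --- equivalently, the $a$-coordinate in the chart $(r,b,a)\mapsto g_rk_bh_a(\delta)$ --- of $\tfrac{d}{d\epsilon}\big|_0 k_\tau h_\epsilon h_sx$. Since $k_\tau$ displaces $h_sx$ along $W^s$, the point $k_\tau h_\epsilon h_sx$ lies on $W^{0s}_{\loc}(h_\epsilon h_sx)$, so projecting it to $W^u_{\loc}(\delta)$ along $W^{0s}$ gives the same point as applying to $h_\epsilon h_sx$ the weak-stable holonomy $\Phi\colon W^u(h_sx)\to W^u(\delta)$ (defined near the basepoints because $h_sx$ and $\delta=k_\tau h_sx$ lie on a common stable leaf). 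Expressing $\Phi$ in the $\mu^u$-parameterizations of source and target, one gets $c=\Phi'(0)$, the derivative of $\Phi$ at the basepoint $h_sx\mapsto\delta$. To evaluate it, factor $\Phi$ through the $W^s$-holonomy $\Psi\colon W^{0u}(h_sx)\to W^{0u}(\delta)$ of center-unstable leaves along the common stable leaf: for $w\in W^u(h_sx)$ one has $\Phi(w)=g_{-\hat\rho(w)}\Psi(w)$, the projection of $\Psi(w)$ onto $W^u(\delta)$ along the flow. The map $\Psi$ commutes with $g_r$, and by Proposition \ref{prop:margulis-measure}(c) it preserves $\mu^{0u}$; writing $\mu^{0u}$ in the coordinates $(w,r)\mapsto g_r(w)$ (with $w$ on an unstable leaf) as an exponential weight in $r$ times $d\mu^u(w)\,dr$ and equating the $\mu^{0u}$-densities on the two center-unstable leaves forces the unstable-coordinate derivative of $\Psi$ to be $e^{-h^u\hat\rho(\epsilon)}$. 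But $\hat\rho(0)=0$, since $h_sx$ and $\delta$ are on a common stable leaf and so $\Psi$ fixes $\delta$; hence $c=\Phi'(0)=e^{0}=1$, which completes the proof.

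The step I expect to be the main obstacle is the last one --- locating precisely what the coefficient $c$ is and extracting $c=1$ from the Margulis property --- rather than the bookkeeping of the second paragraph. A secondary, purely technical point is the regularity required to speak of all these derivatives: one uses the $C^1$ regularity of the weak foliations of a surface geodesic flow and of the densities of $\mu^u,\mu^s$ with respect to arclength on leaves (these densities are H\"older). Alternatively, the whole argument can be run with absolutely continuous parameterizations and Radon--Nikodym derivatives, using only the measure-theoretic form of Proposition \ref{prop:margulis-measure}(c).
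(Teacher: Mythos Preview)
The paper does not prove this lemma; it is quoted without proof from Feldman--Ornstein \cite{FelOrn} (their Lemma~3.4), so there is no in-paper argument to compare against directly. Your approach---differentiate the defining identity $g_\rho k_\tau h_s x = h_\sigma k_t x$ in $s$, isolate the $X^u$-component using $Dg_\rho X^u = e^\rho X^u$, and reduce to showing that the $X^u$-coefficient $c$ of $Dk_\tau X^u(h_sx)$ equals $1$---is the natural one and is essentially how Feldman--Ornstein proceed.

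One simplification for the step you flag as the main obstacle: rather than factoring $\Phi$ through the $W^s$-holonomy $\Psi$ of weak-unstable leaves and tracking an exponential weight in the flow coordinate, you can invoke Remark~\ref{rem:reversible} directly. The dual product formula $\mu = \mu^u \otimes \mu^{0s}$ says precisely that the conditionals $\mu^u$ on strong unstable leaves are invariant under $W^{0s}$-holonomy, so your map $\Phi\colon W^u(h_sx)\to W^u(\delta)$ is $\mu^u$-preserving outright; in the $\mu^u$-parameterizations this forces $\Phi'\equiv 1$, not merely $\Phi'(0)=1$. This bypasses the $(w,r)$-coordinate computation and the auxiliary function $\hat\rho$ entirely. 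Your acknowledgment of the regularity issues is apt: since $h_t$ and $k_t$ are only H\"older, $X^u$ and $X^s$ are only H\"older vector fields, and strictly speaking the argument should be run in the integrated, measure-theoretic form you describe at the end (which is also how \cite{FelOrn} phrase it).
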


\begin{lemma}\label{lemma:3.8}
Given $\epsilon>0$, $\alpha$ may be chosen so small that if $y=g_uh_vk_wx$ with $|u|,|v|,|w|<\alpha$ and $|s|<\frac{\alpha^2}{2\abs{w}}$, then $d(h_sx,h_{e^u\sigma(x,s,w)}y)<\epsilon$.
\end{lemma}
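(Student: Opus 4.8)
The plan is to transport the comparison onto a single base point using the commutation relations, after which the claim reduces to the statement that two maps close to the identity move points very little. Write $\rho=\rho(x,s,w)$, $\sigma=\sigma(x,s,w)$, $\tau=\tau(x,s,w)$ and set $z:=h_\sigma k_wx$. Using $y=g_uh_vk_wx$ and the commutation relation $g_uh_\sigma=h_{e^u\sigma}g_u$ one gets
\[
h_{e^u\sigma(x,s,w)}(y)=h_{e^u\sigma}g_uh_vk_wx=g_uh_\sigma h_vk_wx=g_uh_vh_\sigma k_wx=g_uh_v\,z,
\]
while the defining relation $g_\rho k_\tau h_sx=h_\sigma k_wx=z$ gives $h_sx=k_{-\tau}g_{-\rho}z$. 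Hence
\[
d\!\left(h_sx,\,h_{e^u\sigma(x,s,w)}y\right)=d\!\left(k_{-\tau}g_{-\rho}z,\;g_uh_vz\right)\le d(k_{-\tau}g_{-\rho}z,z)+d(z,g_uh_vz),
\]
and it suffices to make each summand smaller than $\epsilon/2$, uniformly in $z\in M$.

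Because $g_t,h_t,k_t$ are continuous flows on the compact manifold $M$, the maps $(a,b,p)\mapsto k_ag_bp$ and $(a,b,p)\mapsto g_ah_bp$ are uniformly continuous and equal $p$ when $a=b=0$; so there is $\delta>0$ with $d(k_ag_bp,p)<\epsilon/2$ and $d(g_ah_bp,p)<\epsilon/2$ for all $p$ whenever $\max(|a|,|b|)<\delta$. Since $|u|,|v|<\alpha$, everything reduces to choosing $\alpha<\delta$ so small that $|\rho(x,s,w)|<\delta$ and $|\tau(x,s,w)|<\delta$ whenever $|w|<\alpha$ and $|s|<\alpha^2/(2|w|)$. (If $s=0$ or $w=0$ the relation $g_sh_t=h_{e^st}g_s$ reduces the statement directly to the uniform continuity just stated, so one may assume $s,w\neq0$.)

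The role of the hypothesis $|s|<\alpha^2/(2|w|)$ is that it bounds the product, $|sw|<\alpha^2/2$, even though $s$ alone is unbounded, and this is exploited by renormalizing with the geodesic flow. Put $u_0=\tfrac12\log(|w|/|s|)$, so that $\bar s:=e^{u_0}s$ and $\bar w:=e^{-u_0}w$ satisfy $|\bar s|=|\bar w|=\sqrt{|sw|}<\alpha/\sqrt2$. By the equivariance \eqref{eq:rho-equivariance}, $\rho(x,s,w)=\rho(g_{u_0}x,\bar s,\bar w)$, and since $\rho$ vanishes whenever its third argument is $0$ and is uniformly continuous on $M\times\{|\bar s|,|\bar w|\le1\}$, we get $|\rho(x,s,w)|\le\omega(\sqrt{|sw|})$ for a modulus of continuity $\omega$, which is $<\delta$ once $\alpha$ is small. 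For $\tau$ the analogous equivariance (proved identically from $g_sh_t=h_{e^st}g_s$ and $g_sk_t=k_{e^{-s}t}g_s$) reads $\tau(x,s,w)=e^{u_0}\tau(g_{u_0}x,\bar s,\bar w)$; now $\tau(\cdot,0,\bar w)=\bar w$, and since the weak foliations of the $C^2$ Anosov flow $g_t$ are $C^1$ the functions $\rho,\sigma,\tau$ are $C^1$ on a fixed compact neighborhood of $\{s=w=0\}$ (compare Lemma \ref{lemma:3.4}), in particular Lipschitz in the second variable with some constant $L$; hence $|\tau(g_{u_0}x,\bar s,\bar w)-\bar w|\le L|\bar s|$ and
\[
|\tau(x,s,w)|\le e^{u_0}\big(|\bar w|+L|\bar s|\big)=(1+L)|w|<(1+L)\alpha,
\]
which is $<\delta$ once $\alpha$ is small. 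This choice of $\alpha$ finishes the argument.

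I expect the last paragraph to be the only real obstacle. Since $s$ runs over an interval whose length grows like $1/|w|$, one cannot simply use continuity of $\rho,\tau$ near $s=0$, and the geodesic renormalization is forced; moreover the renormalizing factor $e^{u_0}=\sqrt{|w|/|s|}$ can be large, so for $\tau$ it is not enough that $\tau(g_{u_0}x,\bar s,\bar w)$ is close to $\bar w$ — one needs the first-order (Lipschitz) control of $\tau$ in its second argument to see that the product $e^{u_0}\cdot(\text{error})$ stays of order $|w|$. Everything else is soft: commutation identities and uniform continuity of continuous flows on a compact manifold.
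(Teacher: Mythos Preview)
The paper does not prove this lemma; it quotes it from Feldman--Ornstein \cite{FelOrn}, so there is no in-paper argument to compare against. Your reduction is the right one: rewrite both points as displacements of the common point $z=h_\sigma k_wx$, and then the task is exactly to show that $|\rho(x,s,w)|$ and $|\tau(x,s,w)|$ are uniformly small under the hypothesis $|sw|<\alpha^2/2$. The renormalization step handling $\rho$ via \eqref{eq:rho-equivariance} is correct.

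There is one questionable step. You assert that ``since the weak foliations of the $C^2$ Anosov flow $g_t$ are $C^1$, the functions $\rho,\sigma,\tau$ are $C^1$ \dots\ in particular Lipschitz in the second variable.'' This does not follow: $s$ and $w$ are Margulis-time parameters for $h_t,k_t$, and the paper explicitly notes these flows are in general only H\"older and not generated by vector fields. $C^1$ regularity of the weak foliations controls holonomies in smooth transversal coordinates, not in the Margulis parameters. Lemma~\ref{lemma:3.4} gives $\partial_s\sigma=e^\rho$ by a special argument using the uniform expansion of $\mu^u$, and it does not automatically transfer to Lipschitz control of $\tau$ in $s$.

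Two easy repairs are available, and either one completes your proof.
\begin{itemize}
\item[(i)] \emph{Use the symmetric version of Lemma~\ref{lemma:3.4}.} Rewriting the defining relation as $g_{-\rho}h_\sigma k_wx=k_\tau h_sx$ and interchanging the roles of $(h,W^u)$ and $(k,W^s)$ yields, by the same proof as in \cite{FelOrn}, that $\tau$ is differentiable in its third argument with $\partial_w\tau=e^{-\rho}$. Since $\tau(\bar x,\bar s,0)=0$, this gives $|\tau(\bar x,\bar s,\bar w)|\le C|\bar w|$ on the renormalized box, hence $|\tau(x,s,w)|=e^{u_0}|\tau(\bar x,\bar s,\bar w)|\le C\,e^{u_0}|\bar w|=C|w|<C\alpha$.
\item[(ii)] \emph{Use only continuity.} Split cases. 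If $|s|\le|w|$ then $|s|,|w|<\alpha$ and continuity of $\tau$ together with $\tau(\cdot,0,0)=0$ gives $|\tau(x,s,w)|<\delta$ for small $\alpha$. If $|s|>|w|$ then $e^{u_0}=\sqrt{|w|/|s|}\le1$, so $|\tau(x,s,w)|\le|\tau(\bar x,\bar s,\bar w)|$, and again continuity on the compact set $\{|\bar s|,|\bar w|\le\alpha\}$ with $\tau(\cdot,0,0)=0$ gives the bound. No Lipschitz estimate is needed.
\end{itemize}
With either fix in place, the rest of your argument (uniform continuity of the flows on the compact manifold, the triangle inequality at $z$) is correct.
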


\subsection{$u$-partitions}
Recall that when we consider the standardness, it is very crucial to consider the a suitable generating partition. In this paper, we will use the idea of $u-$partition which was introduced in \cite{Ratner}.

\begin{definition}[$u-$isomorphic and $s-$isomorphic \cite{Ratner}]
Two sets $A,B\subset W^{0s}$ are called $u-$isomorphic if there is continuous $\psi:A\times I\to M$ s.t.
\begin{itemize}
\item $\psi(x,I)\subset W^u$, $x\in A$;
\item $\psi(x,0)=x$, $\psi(x,1)\in B$ and the map $\tilde{\psi}:A\to B$, $\tilde{\psi}(x)=\psi(x,1)$ is a homeomorphism.
\end{itemize}
The set $\psi(A\times I)=P$ is called a $u-$cylinder with faces $A,B$. If the positive direction on orbits of $h_t$ goes from $A$ to $B$ we write $A=A_1(P)$, $B=A_2(P)$. $\psi(x,I)$ and $\psi(y,I)$, $x,y\in A$ are called $s-$isomorphic.
\end{definition}

\begin{definition}[$u-$partition \cite{Ratner}]
Let $\alpha=\{P_1,\ldots,P_m\}$ be a partition of $M$ into $u-$cylinders and we will say $\alpha$ is a $u-$partition.
\end{definition}

Recall that Margulis measure has local product structure, thus the boundary of this partition has measure zero. Another important feature of this partition is that we can formulate a family of generating partitions based on this partition. In fact, by shrinking the size of $A,B$ and distance of $A,B$, it is clear that we can construct a family of generating partitions:

\begin{lemma}[Ratner, \cite{Ratner}]
For every $\delta > 0$, there exists a $u$-partition of $M$ such that $\diam(P_i) < \delta$
\end{lemma}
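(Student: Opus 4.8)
The plan is to construct the desired $u$-partition directly from the local product structure of the Margulis measure, chopping $M$ into small boxes adapted to the foliations $W^u$, $W^s$ and the geodesic direction. Concretely, I would first fix a finite cover of the compact manifold $M$ by product neighborhoods $U(p_j) = U^{0s}(p_j) \times U^u(p_j)$ (or equivalently $g_r h_s k_t$-coordinate charts as in the map $f$ of Section~\ref{subsec:margulis}), each of diameter less than a fixed small constant depending only on $\delta$ and the (uniform) contraction/expansion rates of the flow. Inside each chart, the leaves of $W^u$ are exactly the curves $\{\text{pt}\} \times U^u(p_j)$, so any set of the form $Q = D \times J$ with $D \subset U^{0s}(p_j)$ and $J \subset U^u(p_j)$ an interval is automatically a $u$-cylinder: take $\psi(x, \cdot)$ to trace out the $W^u$-segment through $x$, with faces $A_1(Q) = D \times \{\min J\}$ and $A_2(Q) = D \times \{\max J\}$; continuity and the homeomorphism property of $\tilde\psi$ are immediate from the product structure.

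Next I would turn this cover into a genuine partition. The standard device is to order the charts $U_1, \dots, U_L$ and set $P_i' = U_i \setminus \bigcup_{j<i} U_j$; then refine each $P_i'$ into finitely many $u$-cylinders by slicing the base $U^{0s}$-factor into small pieces and the $U^u$-fiber into short intervals. One must check that the pieces obtained this way are still (finite unions of) $u$-cylinders — i.e., that cutting a $u$-cylinder by another chart's product boundary yields $u$-cylinders. This is where the local product structure is used again: the intersection of two product boxes is, up to the holonomy homeomorphisms between overlapping $W^u$ (resp. $W^{0s}$) plaques, again a product of a piece of $W^{0s}$ with a subinterval of $W^u$, hence a $u$-cylinder (possibly after a further finite subdivision to make the fibers honest intervals). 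Since each $P_i'$ meets only finitely many charts, finitely many such cuts suffice, and the total number of cylinders is finite. Throughout, the diameter bound $\diam(P_i) < \delta$ is arranged by choosing the original charts and the subdivisions fine enough; uniform continuity of the coordinate maps on the compact $M$ makes this quantitative.

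The main obstacle I anticipate is bookkeeping at the overlaps: ensuring that after removing lower-indexed charts and subdividing, every remaining atom is genuinely a $u$-cylinder in the sense of the definition, with well-defined faces $A_1, A_2 \subset W^{0s}$ and with $\psi(x, I) \subset W^u$ for every $x$ in the base. The subtlety is that the "base" $A$ of a cylinder must lie in a single $W^{0s}$-leaf family compatibly, and when one box is carved by the boundary of a neighboring box, the carved region's base is described in the neighbor's coordinates via holonomy; one has to verify this holonomy is a homeomorphism onto its image (true, by continuity of the foliations and the local product structure for an Anosov flow) and that no atom ends up with a disconnected $W^u$-fiber (cured by one more round of subdivision). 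A secondary, purely measure-theoretic point — that the boundaries of all these cylinders are $\mu$-null — is already granted by the remark preceding the statement, since the Margulis measure has local product structure and the boundaries are contained in finitely many codimension-one pieces of leaves.

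\begin{proof}[Proof sketch]
Fix $\delta > 0$. By compactness of $M$ and continuity of the coordinate chart $f\colon (r,s,t)\mapsto g_r h_s k_t(x)$, choose a finite cover $\{U_j = f_{x_j}(R_j)\}_{j=1}^L$ of $M$ by product boxes $R_j = I_j^0 \times I_j^s \times I_j^u$ of cubes in the $(r,t)$- and $s$-directions, small enough that $\diam(U_j) < \delta$ for all $j$. In each box the $W^u$-leaves are the curves $r,t$ fixed, so any sub-box $D \times J$ with $J$ an interval in the $s$-coordinate is a $u$-cylinder, with faces the two endpoint slices. Well-order the cover and put $V_i = U_i \setminus \bigcup_{j<i} U_j$. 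Each $V_i$ is a finite union of sets obtained by intersecting the box $U_i$ with complements of boxes $U_j$; by the local product structure of the weak-stable and unstable foliations, such an intersection is, after composing with the holonomy homeomorphisms identifying overlapping plaques, again a product of a piece of $W^{0s}$ with a (finite union of) $W^u$-subintervals, hence a finite union of $u$-cylinders. Subdividing further if necessary so that every fiber is a single interval, we obtain a finite partition $\alpha = \{P_1,\dots,P_m\}$ of $M$ into $u$-cylinders with $\diam(P_i) \le \diam(U_i) < \delta$. The boundaries of the $P_i$ lie in finitely many pieces of leaves of $W^u$, $W^s$ and the orbit foliation, which are $\mu$-null by the product structure of the Margulis measure, so $\alpha$ is a bona fide $u$-partition.
\end{proof}
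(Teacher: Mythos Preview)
The paper does not give its own proof of this lemma: it is attributed to \cite{Ratner}, and the only argument offered is the one-line remark preceding the statement (``by shrinking the size of $A$, $B$ and distance of $A$, $B$, it is clear\dots''). Your construction is exactly the natural way to make that remark precise, so in spirit you are doing what the paper (and Ratner) intend.

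There is one genuine technical slip worth flagging. In the chart $f(r,s,t)=g_r h_s k_t(x)$ the $W^u$-plaques are indeed the curves $\{r,t\ \text{fixed}\}$, and the slice $\{s=0\}$ is a $W^{0s}$-plaque; but for $s_0\neq 0$ the slice $\{s=s_0\}$ is \emph{not} contained in a $W^{0s}$-leaf, because $h_s$ does not carry $W^{0s}$-leaves to $W^{0s}$-leaves (already in constant curvature, $k_t h_{s_0}$ is not of the form $g_a h_{s_0} k_b$). Thus your ``top face'' $D\times\{\max J\}$ fails the requirement $B\subset W^{0s}$ in the definition of a $u$-cylinder. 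The fix, which you essentially anticipate in your discussion of obstacles, is to replace the constant-$s$ top slice by the graph of the $W^u$-holonomy: flow each $d\in D$ for the variable time $\tau(d)$ determined by $h_{\tau(d)}(d)\in W^{0s}(h_{s_0}(x))$. Local product structure makes $\tau$ continuous and uniformly close to $s_0$, so the resulting $u$-cylinders still have diameter $<\delta$, and the rest of your partition-by-disjointification argument goes through unchanged.
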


\section{Proof of Theorem \ref{thm:Main}}\label{sec:uniformlyExpanding}
In this section, we will prove the Theorem \ref{thm:Main}. Before we give a proof of this theorem, we will introduce some definitions and tools which will help us formulate the proof.

From now, we will prove the Theorem \ref{thm:Main} for a fixed $u-$partition. And then the same proof can be applied to any other partitions and thus we done.

Fix a $u-$partition $\alpha_m$, let $V_{\epsilon^2}^m$ be the $\epsilon^2$ neighborhood of the boundary of $\alpha_m$. Since Margulis measure $\mu$ has a local product measure on $W^{0s}$ and $W^u$, it follows that $\mu(V_{\epsilon^2}^m)=O(m^2\epsilon^2)$. By ergodic theorem on $h_t$ and $\chi_{V_{\epsilon^2}^m}$, we obtain a set $D_{\epsilon}$ such that $\mu(D_{\epsilon})>1-\epsilon^2$ and a number $N_{\epsilon}>0$ such that for every $R\geq N_{\epsilon}$ and every $y\in D_{\epsilon}$, we have
\begin{equation}
|\{t\in[0,R]:h_t(y)\in V_{\epsilon^2}^m\}|\leq\frac{\epsilon R}{2}.
\end{equation}

If we can prove the upper bound of the number of Kakutani matching balls is bounded, then we finish our proof. Indeed, the upper bound of number of Kakutani balls will follow from the following three claims:
\begin{definition}[Pre-Matching Balls]\label{def:kakball}
If $x,y\in M$ we say that
$x\in PM(R,\epsilon,y)$ if and only if $x = g_uh_vk_wy$ and
$\abs{w} < \ve / R$, $\abs{u} < \ve$ and $\abs{v}<\epsilon$.
\end{definition}

Notice that since the coordinates $(u,v,w) \mapsto g_uh_vk_wy$ take Lebesgue measure to the Margulis measure, $\mu(PM(R,\ve,y)) = \ve^3/R$ (recall the end of Section \ref{subsec:margulis}).

\begin{claim}\label{ClaimA}
For every $y\in D_{\epsilon}$, every $R\geq N_{\epsilon}$ and every $x\in M$ of there exists $p\in[0,\epsilon^3R]$ such that if for $|u|,|v|<\epsilon^5$ and $|w|<\frac{\epsilon^5}{R}$:
$$h_px\in PM(R,\epsilon^5,y)$$
Then $x\in B_R(y,\epsilon,\alpha_m)$.
\end{claim}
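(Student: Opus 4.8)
The plan is to use the hypothesis $h_p x \in PM(R,\epsilon^5,y)$ to produce, via the maps $\rho,\sigma,\tau$ and Lemma~\ref{lemma:3.8}, an explicit $(\epsilon,\alpha_m)$-matching $\psi$ of the orbit segments $I_R(y)$ and $I_R(x)$, after possibly discarding a short initial piece of length $p \le \epsilon^3 R$. Write $h_p x = g_u h_v k_w y$ with $|u|,|v| < \epsilon^5$ and $|w| < \epsilon^5/R$. The key point is that the $k_w$-displacement is tiny (of order $\epsilon^5/R$) relative to the length $R$ of the orbit segment we wish to match, so that along the whole segment $\{h_s(h_p x): s \in [0,R]\}$ the product-box coordinate in the $W^s$-direction stays within the regime where $\rho,\sigma,\tau$ are defined (here one uses the equivariance \eqref{eq:rho-equivariance}, which extends $\rho$ to all $s,t$ with $|st|$ small — and indeed $|sw| \le R \cdot \epsilon^5/R = \epsilon^5$ is small).

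First I would set up the matching function. For $s \in [0,R]$ we have $h_s(h_p x) = h_s g_u h_v k_w y$. Absorbing $g_u$ and $h_v$ using $g_s h_t = h_{e^s t} g_s$ and the relation $g_\rho k_\tau h_s = h_\sigma k_t$, one rewrites $h_s(h_p x)$ as $g_{\rho'} k_{\tau'} h_{\psi(s)}(y)$ for appropriate functions, where $\psi(s) = e^u \sigma(y, \cdot, w)$-type expression (the precise bookkeeping of the $h_v$ term just shifts the base point and contributes an additive error bounded by $|v| < \epsilon^5$). Lemma~\ref{lemma:3.4} gives $\psi'(s) = e^{u}e^{\rho(y,s,w)}$ (up to the harmless reparametrization), and since $|u| < \epsilon^5$ and $\rho(y,0,w) = 0$ with $\rho$ continuous and controlled by $|sw| \le \epsilon^5$, we get $|\psi'(s) - 1| < \epsilon$ on all of $[0,R]$ provided $\alpha$ (the partition/box size) was chosen small enough. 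This handles the derivative condition in the definition of $(\epsilon,\alpha_m)$-matchability.

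Next I would verify the partition-agreement condition on a set $A \subset [0,R]$ of measure $> (1-\epsilon)R$. By Lemma~\ref{lemma:3.8}, with the bound $|s| < \alpha^2/(2|w|)$ — which is implied by $|s| \le R$ and $|w| < \epsilon^5/R$ once $\epsilon$ is small — we get $d(h_s(h_p x), h_{\psi(s)} y) < \epsilon^2$ for all $s$ in range. Hence $h_s(h_p x)$ and $h_{\psi(s)} y$ lie in the same atom of $\alpha_m$ unless one of them is within $\epsilon^2$ of the boundary of $\alpha_m$, i.e.\ unless $h_{\psi(s)} y \in V^m_{\epsilon^2}$. Here is where the set $D_\epsilon$ enters: since $y \in D_\epsilon$ and $R \ge N_\epsilon$, the set of times $t \in [0,R]$ with $h_t y \in V^m_{\epsilon^2}$ has measure $\le \epsilon R/2$, and pulling this back through $\psi$ (whose derivative is within $\epsilon$ of $1$) the corresponding bad set of $s$ has measure $\le \epsilon R$. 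Taking $A$ to be the complement, and accounting for the discarded initial interval of length $p \le \epsilon^3 R$ (absorbed by shrinking constants / enlarging $\epsilon$ slightly), we obtain a genuine $(\epsilon,\alpha_m)$-matching, so $x \in B_R(y,\epsilon,\alpha_m)$.

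The main obstacle I anticipate is the careful bookkeeping in the first step: rewriting $h_s g_u h_v k_w y$ in the normal form $g_* k_* h_* y$ while keeping \emph{uniform} (in $s \in [0,R]$) control of all three resulting parameters, and in particular showing the time-change $\psi$ is globally defined and monotone on $[0,R]$ rather than just locally. This is exactly where the equivariance \eqref{eq:rho-equivariance} and the scale-invariant nature of the bounds ($|w| < \epsilon^5/R$ against length $R$) must be exploited, and where one must be careful that the constant $\alpha$ from Lemmas~\ref{lemma:3.4} and \ref{lemma:3.8} is chosen before $\epsilon$ and the partition $\alpha_m$ is fixed. The role of the parameter $p$ (and why the power $\epsilon^3$ appears) is a minor point: it provides slack so that the initial-segment discrepancy and the endpoint mismatch of the matched intervals $A, A'$ are both negligible.
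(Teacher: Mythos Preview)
Your proposal is correct and follows essentially the same approach as the paper: define the matching function $l(t)=e^u\sigma(\cdot,t,w)+p$, control $|l'(t)-1|$ via Lemma~\ref{lemma:3.4} together with the equivariance~\eqref{eq:rho-equivariance} (using $|tw|\le\epsilon^5$), and invoke Lemma~\ref{lemma:3.8} for the distance estimate so that partition disagreement only occurs when $h_ty\in V^m_{\epsilon^2}$, which the $D_\epsilon$ hypothesis controls. The only cosmetic difference is that the paper builds the shift by $p$ directly into $l$ rather than framing it as ``discard an initial segment of length $p$'', but the two formulations are equivalent.
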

\begin{claim}\label{ClaimB}
For every $y\in D_{\epsilon}$ and every $p,q\in[0,\epsilon^3R]$, with $|p-q|\geq1$, we have
$$h_{-p}(PM(R,\epsilon^5,y))\cap h_{-q}(PM(R,\epsilon^5,y))=\emptyset.$$
\end{claim}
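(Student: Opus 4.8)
The plan is to show that if $h_{-p}(z) \in PM(R,\eps^5,y)$ and $h_{-q}(z) \in PM(R,\eps^5,y)$ with $|p-q| \geq 1$, we reach a contradiction with the size constraints in Definition \ref{def:kakball}. Suppose $z' := h_{-p}(z) = g_{u_1}h_{v_1}k_{w_1}y$ and $z'' := h_{-q}(z) = g_{u_2}h_{v_2}k_{w_2}y$, with $|u_i|,|v_i| < \eps^5$ and $|w_i| < \eps^5/R$. Then $z' = h_{q-p}(z'')$, so
\[
h_{q-p}\bigl(g_{u_2}h_{v_2}k_{w_2}y\bigr) = g_{u_1}h_{v_1}k_{w_1}y.
\]
The idea is that applying $h_{q-p}$ with $|q-p| \geq 1$ moves a point by a definite amount along the $W^u$ direction, which cannot be absorbed by the tiny $g_uh_vk_w$ coordinates available on the right-hand side, because the $h$-displacement $v_1$ is bounded by $\eps^5$ while the center and stable displacements $u_1, w_1$ can only contribute a small shearing correction (of order controlled by $|u_i|,|w_i|$, hence tiny).

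First I would rewrite both sides in the canonical coordinates $(r,s,t) \mapsto g_rh_sk_t(y)$ from the end of Section \ref{subsec:margulis}, which is a homeomorphism onto a neighborhood of $y$; since all displacements involved are small (the $u_i,v_i$ are $O(\eps^5)$, the $w_i$ are $O(\eps^5/R)$, and we will see $|q-p|$ must also be small or else both points escape the chart, contradicting that they are both in $PM$), both $z'$ and $z''$ lie in a single product chart. Next I would compute the $h$-coordinate of $h_{q-p}(z'')$ in this chart. Using the commutation relation $g_sh_t = h_{e^st}g_s$ and the defining relation for $\rho,\sigma,\tau$ from Lemma \ref{lemma:3.4} (namely $g_\rho k_\tau h_s x = h_\sigma k_t x$ and $\partial\sigma/\partial s = e^\rho$), the effect of pre-composing $g_{u_2}h_{v_2}k_{w_2}y$ by $h_{q-p}$ is: the $h$-flow time $q-p$ gets conjugated through $k_{w_2}$ into an $h$-displacement of size $\sigma(\cdot, q-p, w_2)$, which by Lemma \ref{lemma:3.4} equals $\int_0^{q-p} e^{\rho(\cdot,s,w_2)}\,ds$; since $\rho(x,s,0)=0$ and $\rho$ is continuous, for $|w_2|$ small this integral is $(q-p)(1 + o(1))$, in particular of absolute value at least, say, $|q-p|/2 \geq 1/2$. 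Comparing $h$-coordinates on the two sides of the displayed equation then forces $|v_1| \geq 1/2 - O(\eps^5) > \eps^5$ (for $\eps$ small), contradicting $|v_1| < \eps^5$.

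The main obstacle is the bookkeeping in the second step: carefully tracking how $h_{q-p}$ interacts with the $g$ and $k$ factors, i.e. controlling the ``shearing'' corrections $\rho,\sigma,\tau$ and verifying they remain negligible. Concretely, one must check that the domain restriction in Lemma \ref{lemma:3.4} (small $st$) is satisfied — here $s = q-p$ could a priori be as large as $\eps^3 R$ and $t = w_2$ is $O(\eps^5/R)$, so $|st| = O(\eps^8)$ is indeed small — and that the continuity of $\rho$ at $(x,s,0)$ gives a uniform bound $\rho(x,s,w_2) \to 0$ as $w_2 \to 0$ uniformly in $x$ and in $|s| \leq \eps^3 R$, which holds by the equivariance \eqref{eq:rho-equivariance} and compactness of $M$. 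Once these uniform estimates are in place, the contradiction is immediate: a displacement of size $\geq 1/2$ in the $h$-direction simply cannot be realized within a $PM$-ball whose $h$-extent is $2\eps^5$. I would also note that by symmetry it suffices to treat $q > p$, and that the hypothesis $y \in D_\eps$ is not actually needed for this claim — it is the combinatorial/measure-theoretic ingredient used together with Claim \ref{ClaimA} in the final covering argument.
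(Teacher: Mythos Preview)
Your setup and overall strategy match the paper's: you correctly reduce to an equation of the form $g_{u_2}\,h_{V}\,k_{w_2}\,y = g_{u_1}\,h_{v_1}\,k_{w_1}\,y$ with $|V|\gtrsim |p-q|\ge 1$ while all other parameters are $O(\eps^5)$ or smaller, and you want a contradiction. (Incidentally, the $\sigma,\rho$ machinery is unnecessary here: since $k_{w_2}$ is applied \emph{first} to $y$, the single relation $h_r g_u = g_u h_{e^{-u}r}$ already gives $h_{q-p}(z'') = g_{u_2}\,h_{v_2+e^{-u_2}(q-p)}\,k_{w_2}\,y$ directly, which is what the paper uses.)

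The genuine gap is the step ``comparing $h$-coordinates forces $|v_1|\ge 1/2 - O(\eps^5)$''. This presumes the chart $(r,s,t)\mapsto g_r h_s k_t\,y$ is injective on a domain containing $s=V$, but $|V|$ may be as large as $\eps^3 R$ with $R$ unbounded, whereas the product structure at the end of Section~\ref{subsec:margulis} only gives a \emph{local} homeomorphism. Since horocycle orbits are dense, $g_{u_2}h_V k_{w_2}y$ can perfectly well return to the tiny $PM$-neighborhood for large $|V|$ and then admit a second representation with small $h$-coordinate; your parenthetical ``$|q-p|$ must also be small or else both points escape the chart'' is precisely the assertion to be proved, so invoking it is circular.

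The paper closes this gap with a renormalization step you are missing. After writing the relation as a fixed-point equation $(\text{composition})\cdot y = y$, it conjugates by $g_{-t}$ with $t=\log(\eps^{-2}|p-q|)$. This rescales the dominant $h$-parameter from $|V|$ down to $\approx\eps^2$, while the $k$-parameters get multiplied by $e^t=\eps^{-2}|p-q|\le \eps R$ and hence stay $\le \eps^6$ (here the specific shape of $PM$, namely $|w|<\eps^5/R$ together with $|p-q|\le \eps^3 R$, is exactly what is needed). Now \emph{all} parameters are small, so one is legitimately inside the local product chart, and the composition visibly moves $g_{-t}y$ by $\gtrsim\eps^2$, contradicting that it fixes $g_{-t}y$. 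Your observation that the hypothesis $y\in D_\eps$ plays no role in this claim is correct.
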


Let us prove Theorem \ref{thm:Main} before showing each claim:

\begin{proof}[Proof of Theorem \ref{thm:Main}]
Suppose $y\in D_{\epsilon}$. Then by claim \ref{ClaimA}, we have
$$\bigcup_{p\in[0,\epsilon^3R]}h_{-p}(PM(R,\epsilon^5,y))\subset B_R(y,\epsilon,\alpha_m).$$

By claim \ref{ClaimB} and the formula for $\mu(PM(R,\ve^5,y))$, we have
$$\mu(B_R(y,\epsilon,\alpha_m))\geq \mu(\bigcup_{p\in[0,\epsilon^3R]}h_{-p}(PM(R,\epsilon^5,y)))\geq\epsilon^3R\mu(PM(R,\epsilon^5,y))\geq \epsilon^{18}.$$

By the remarks following the statement of Theorem \ref{thm:standard-criterion}, we conclude $h_t$ is standard.
\end{proof}

\begin{proof}[Proof of Claim \ref{ClaimA}]
Take $y\in D_{\epsilon}$ and let $p\in[0,\epsilon^3R]$ be such that $h_px\in PM(R,\epsilon^5,y)$.

By Definition \ref{def:kakball}, we have for some $|u|,|v|<\epsilon^5$, $|w|<\frac{\epsilon^{\color{red} 5}}{R}$ such that
\begin{equation}
x= h_{-p}g_uh_vk_wy.
\end{equation}

Let $\psi(t)=e^u\sigma(x,t,w)$ as Lemma \ref{lemma:3.8}, $l(t)=\psi(t)+p$, $A(x,y)=\{t\in[0,R]: h_t(y)\notin V_{\epsilon^2}^m\}$. Moreover, by Lemma \ref{lemma:3.4} and \eqref{eq:rho-equivariance}:

\[ l'(t) = e^u\dfrac{\partial \sigma}{\partial t} (x,t,w) = e^{u + \rho(x,t,w)} = \exp(u + \rho(g_{-\log t}x,\ve,wt/\ve)). \]

Notice that $u$ is small and $\abs{w} < \ve^5 / R < \ve^5/t$, so $\abs{wt/\ve} < \ve^4$. Therefore, since $\rho(x,\ve,0) = 0$ for all $x$ and $(x,t) \mapsto \rho(x,\ve,t)$ is jointly continuous, we have, for sufficiently small $\ve$ depending on the fixed function $\rho$, $$|l'(t)-1|<\epsilon$$ for every $t\in[0,R]$ and hence $l$ satisfies the matching function condition of $(\epsilon,\alpha_m)$.

By Lemma \ref{lemma:3.8}, we know that
\begin{equation}
d(h_ty,h_{l(t)}x)\leq \epsilon.
\end{equation}

Thus for every $t\in A(x,y)$, we have $\alpha_m(h_ty)=\alpha_m(h_{l(t)}x)$. Since $|A(x,y)|\geq (1-\epsilon)R$, thus we have $x\in B_R(y,\epsilon,\alpha_m)$ and finish the proof.
\end{proof}

\begin{proof}[Proof of Claim \ref{ClaimB}:]
Suppose that there exists $x\in h_{q-p}(PM(R,\epsilon^5,y))\cap PM(R,\epsilon^5,y)$ with $\epsilon^3R\geq|p-q|\geq1$ and $y\in D_{\epsilon}$. Thus by the Definition \ref{def:kakball} and denote $r=p-q$, we have
$$x=g_uh_vk_wy$$
and
$$h_{-r}x=g_{u'}h_{v'}k_{w'}y$$
where $|u|,|u'|,|v|,|v'|\leq\epsilon^5$ and $|w|,|w'|\leq\frac{\epsilon^{\color{red} 5}}{R}$.

Therefore using the first equality to express $x$ in the second equality, we have,
$$h_{-r}g_uh_vk_wy=g_{u'}h_{v'}k_{w'}y$$

Recall that $h_{-r}g_u=g_uh_{-e^{-u}r}$ thus we have
\begin{equation}\label{eq:ClaimBcontradict}
g_uh_{v-e^{-u}r}k_wy=g_{u'}h_{v'}k_{w'}y.
\end{equation}

Then acting by $k_{-w'}h_{-v'}g_{-u'}$ on the both sides of \eqref{eq:ClaimBcontradict}, we have
\begin{equation}\label{eq:ClaimBcontradict2}
k_{-w'}h_{-v'}g_{u-u'}h_{v-e^{-u}r}k_wy=y.
\end{equation}

We now further acti on both sides of \eqref{eq:ClaimBcontradict2} by $g_{-t}$ and pushing it through to $y$ via conjugation. We select $t=\log(\epsilon^{-2} r)$, to get
\begin{equation}
(g_{-t}k_{-w'}g_t)(g_{-t}h_{-v'}g_t)g_{u-u'}(g_{-t}h_{v-e^{-u}r}g_t)(g_{-t}k_wg_t)g_{-t}y=g_{-t}y.
\end{equation}
By renormalization relation, we have,
\begin{equation}\label{eq:ClaimBcontradict3}
k_{-e^tw'}h_{-e^{-t}v'}g_{u-u'}h_{e^{-t}(v-e^{-u}r)}k_{e^tw}g_{-t}y=g_{-t}y.
\end{equation}

Since $\max\{|e^tw'|,|e^{-t}v'|,|u-u'|,|e^tw|\}\leq\epsilon^5$ and $2\epsilon\geq|e^{-t}(v-e^{-u}r)|\geq\frac{1}{2}\epsilon^2$  then we have the left side of \eqref{eq:ClaimBcontradict3} is $\epsilon^2$ away from $g_{-t}y$ and thus we get a contradiction.
\end{proof}



\end{document}